\newtheorem{thm}{Theorem}[section]
\newtheorem{cor}[thm]{Corollary}
\theoremstyle{definition}
\theoremstyle{remark}
\numberwithin{equation}{section}
\newcommand{\eps}{\varepsilon}
\newcommand{\To}{\longrightarrow}
\begin{document}

\title{On the distribution of random words in a compact Lie group}%
\author{Hariharan Narayanan}
\address{School of Technology and Computer Science, Tata Institute of Fundamental Reseaarch, Mumbai}
\email{hariharan.narayanan@tifr.res.in}

%\thanks{}%
%\subjclass{}%
%\keywords{}%

%\date{}%
%\dedicatory{}%
%\commby{}%
\newtheorem{theorem}{Theorem}
\newtheorem{proposition}{Proposition}
\newtheorem{lemma}[theorem]{Lemma}
\newtheorem{corollary}[theorem]{Corollary}
\newtheorem{remark}[theorem]{Remark}
\newtheorem{question}[theorem]{Question}
\newtheorem{example}[theorem]{Example}
\newtheorem{definition}{Definition}
\newtheorem{notation}{Notation}
\newtheorem{obs}{Observation}
\newtheorem{fact}{Fact}
\newtheorem{claim}{Claim}
\newcommand{\Prob}{\ensuremath{\text{Pr}}}
\newcommand{\E}{\ensuremath{\mathbb E}}
\newcommand{\R}{\ensuremath{\mathbb R}}
\newcommand{\cX}{\ensuremath{\mathcal X}}
\newcommand{\cT}{\ensuremath{\mathcal T}}
\newcommand{\K}{\ensuremath{\mathcal K}}
\newcommand{\F}{\ensuremath{\mathcal F}}
\newcommand{\FF}{\ensuremath{\mathcal F}}
\newcommand{\tr}{\ensuremath{{\scriptscriptstyle\mathsf{Tr}}}}
\newcommand{\inner}[1]{\left\langle #1 \right\rangle}
\newcommand{\reach}{\mathrm{reach}}

%Hari's shortcuts
\newcommand{\lab}{\label} \newcommand{\M}{\ensuremath{\mathcal M}} \newcommand{\ra}{\ensuremath{\rightarrow}} \def\eee{{\mathrm e}} \def\a{{\mathbf{\alpha}}} \def\de{{\mathbf{\delta}}} \def\De{{{\Delta}}} \def\l{{\mathbf{\lambda}}} \def\m{{\mathbf{\mu}}}
\def\tm{{\tilde{\mu}}} \def\var{{\mathrm{var}}} \def\beq{\begin{eqnarray}} \def\eeq{\end{eqnarray}} \def\ben{\begin{enumerate}}
\def\een{\end{enumerate}}
 \def\bit{\begin{itemize}}
\def\eit{\end{itemize}}
 \def\beqs{\begin{eqnarray*}} \def\eeqs{\end{eqnarray*}} \def\bel{\begin{lemma}} \def\eel{\end{lemma}}
\newcommand{\N}{\mathbb{N}} \newcommand{\Z}{\mathbb{Z}} \newcommand{\Q}{\mathbb{Q}} \newcommand{\C}{\mathcal{C}} \newcommand{\CC}{\mathcal{C}}
\newcommand{\T}{\mathbb{T}} \newcommand{\A}{\mathbb{A}} \newcommand{\x}{\mathbf{x}} \newcommand{\y}{\mathbf{y}} \newcommand{\z}{\mathbf{z}}
\newcommand{\n}{\mathbf{n}} \newcommand{\I}{\mathbb{I}} \newcommand{\II}{\mathcal{I}} \newcommand{\EE}{\mathbb{E}} \newcommand{\p}{\mathbb{P}}
\newcommand{\PP}{\mathcal P} \newcommand{\BB}{\mathcal B} \newcommand{\HH}{\mathcal H} \newcommand{\e}{\mathbf{e}} \newcommand{\one}{\mathrm{1}}
\newcommand{\LL}{\mathcal L} \newcommand{\MM}{\mathcal M}\newcommand{\NN}{\mathcal N} \newcommand{\la}{\lambda} \newcommand{\Tr}{\text{Trace}} \newcommand{\aaa}{\alpha}
\def\eee{{\mathrm e}} \def\a{{\mathbf{\alpha}}} \def\l{{\mathbf{\lambda}}} \def\eps{{\epsilon}} \def\A{{\mathcal{A}}} \def\ie{i.\,e.\,} \def\g{G}
\def\vol{\mathrm{vol}}\newcommand{\tc}{{\tilde{c}}}
\newcommand{\tP}{\tilde{P}}
\newcommand{\sig}{\sigma}
\newcommand{\rr}{\mathbf{r}}
\renewcommand{\a}{\alpha}
\newcommand{\ta}{\tilde{\alpha}}
\renewcommand{\b}{\beta}
\newcommand{\tf}{\hat{F}}
\newcommand{\tix}{\tilde{x}}
\newcommand{\tif}{\tilde{f}}
\newcommand{\tih}{\tilde{h}}
\newcommand{\tiv}{\tilde{v}}
\newcommand{\tG}{\hat{G}}
\newcommand{\tZ}{\hat{Z}}
\newcommand{\hz}{\hat{z}}
\newcommand{\tg}{\hat{G}}
\newcommand{\tv}{\hat{V}}
\newcommand{\tmu}{\tilde{\mu}}
\newcommand{\tS}{\tilde{S}}
\newcommand{\tk}{\hat{K}}
\newcommand{\rmm}{\rho_{min}}
\newcommand{\La}{\Lambda}
\newcommand{\sy}{\mathcal{S}_y}
\newcommand{\cc}{\mathbf{c}}
\newcommand{\RR}{\mathbb{R}}
\newcommand{\dist}{dist}
\newcommand{\dhaus}{\mathbf{d}_{\mathtt{haus}}}
\newcommand{\G}{\mathcal{G}}
\newcommand{\TG}{\mathcal{\tilde{G}}}
\newcommand{\fat}{\mathrm{fat}}
\newcommand{\cuf}{\textsf{CurvFit}}
\renewcommand{\H}{\mathbb{H}}
\newcommand{\cscs}{{\textsf{Coreset Conditions}\,}}
\newcommand{\csone}{{\textsf{Case 1}\,}}
\newcommand{\cstwo}{{\textsf{Case 2}\,}}
\newcommand{\lareig}{{\textsf{LarEig}}}
\newcommand{\smpl}{{\textsf{Sample}}}
\newcommand{\inprd}{{\textsf{InnProd}}}
\newcommand{\cyl}{{\texttt{cyl}}}
\newcommand{\tran}{{\texttt{tran}}}
\newcommand{\norm}{{\texttt{norm}}}
\newcommand{\D}{\mathcal{\bar{D}}}
\newcommand{\base}{\texttt{base}}
\newcommand{\stalk}{\texttt{stalk}}
\newcommand{\se}{\mathrm{s}}
\newcommand{\htau}{\hat{\tau}}
\newcommand{\str}{\texttt{strt}}
\newcommand{\pare}{p} \newcommand{\chil}{h}
\newcommand{\hy}{\hat{y}}
\newcommand{\hV}{\hat V}
\newcommand{\ty}{\tilde{y}}
\newcommand{\ts}{\tilde{s}}
\newcommand{\tga}{\tilde{\gamma}}
\newcommand{\tx}{\tilde{x}}
\newcommand{\hN}{\hat{N}}
\newcommand{\oc}{\overline{c}}
\newcommand{\ob}{\check{c}}
\newcommand{\oC}{\overline{C}}
\newcommand{\nli}{\\\\\noindent}
\newcommand{\iN}{\mathit{N}}
\newcommand{\hess}{\,\texttt{Hess}\,}
\newcommand{\nF}{\nabla_F}
\newcommand{\lip}{Lipschitz}
\newcommand{\beps}{\bar{\eps}}
\newcommand{\ttp}{\texttt{P}}
\newcommand{\wh}{\texttt{Wh}}
\newcommand{\bn}{\bar{n}}
\newcommand{\bN}{\bar{N}}
\newcommand{\obj}{\zeta}
\newcommand{\bmp}{\theta}
\newcommand{\hak}{\hat{k}}
\newcommand{\asdf}{{asdf}}
\newcommand{\grid}{\texttt{grid}}
\newcommand{\fix}{\marginpar{FIX}}
\newcommand{\new}{\marginpar{NEW}}
\newcommand{\beqn}{\begin{equation}}
\newcommand{\eeqn}{\end{equation}}

% ----------------------------------------------------------------
\begin{abstract}
Let $G$ be a compact Lie group. Suppose $g_1, \dots, g_k$ are chosen independently from the Haar measure on $G$. Let  $\A = \cup_{i \in [k]} \A_i$, where, $\A_i :=  \{g_i\} \cup \{g_i^{-1}\}$.  Let $\mu_{\A}^\ell$ be the uniform measure over all words of length $\ell$ whose alphabets belong to $\A$. We give probabilistic bounds on the nearness of a heat kernel smoothening of $\mu_{\A}^\ell$ to a constant function on $G$ in $\LL^2(G)$. We also give probabilistic bounds on the maximum distance of a point in $G$ to the support of $\mu_{\A}^\ell$. Lastly, we show that these bounds cannot in general be significantly improved by analyzing the case when $G$ is the $n-$dimensional torus.

The question of a spectral gap of a natural Markov operator associated with $\A$ when $G$ is $SU_2$  was reiterated by Bourgain and Gamburd in \cite{BG1}, being first raised by Lubotzky, Philips and Sarnak \cite{Lub} in 1987 and is still open. In the setting of $SU_2$, our results can be viewed as addressing a quantitative version of a weak variant of this question.

\end{abstract}
\maketitle
% ----------------------------------------------------------------
\section{Introduction}
Let $G$ be a compact $n-$dimensional Lie group endowed with a left-invariant Riemannian metric $d$.  Thus \beqs \forall g, x, y \in G,\,d(x, y) = d(gx, gy).\eeqs 
We will denote by $C_G$ a constant depending on  $(G, d)$ that is greater than $1$. Suppose $g_1, \dots, g_k$ are chosen independently from the Haar measure on $G$. Let  $\A = \cup_{i \in [k]} \A_i$, where, $\A_i :=  \{g_i\} \cup \{g_i^{-1}\}$. 
Let the Heat kernel at $x$  corresponding to Brownian motion on $G$ with respect to the metric $d$ started at the origin $o \in G$ for time $t$ be $H_t(x)$.  Let $\mu_{\A}^\ell$ be the uniform measure over all words of length $\ell$ whose alphabets belong to $\A$. Our first result, Theorem~\ref{thm:main:1} relates to equidistribution and gives a lower bound on the probability that  $\|\mu_{\A}^\ell*H_t - \frac{\one}{\vol G}\|_{\LL^2(G)}$ is less than a specified quantity $2\eta$. Our second result, Theorem~\ref{thm:11} provides conditions under which the set of all elements of $G$ which can be expressed as words of length less or equal to $\ell$ with alphabets in $\A$, form a $2r-$net of $G$ with probability at least $1-\de$. For constant $\de$, both $k$ and $\ell$ can be chosen to be less than $C n \log (1/r)$, where $C$ is a universal constant.  Lastly, by analysing the situation when $G$ is an $n-$dimensional torus, we show in Section~\ref{sec:lower}, that the conditions stated in Theorem~\ref{thm:11} are nearly optimal.

The overall strategy employed here is similar to that of Landau and Russell in \cite{LR}, which reproves with better constants, the result of Alon and Roichman \cite{AR} that random Cayley graphs are expanders. However, in our context of compact Lie groups, the analysis requires additional ingredients, due to the fact that $\LL^2(G)$ is infinite dimensional. We deal with this difficulty by  restricting  our attention to certain finite dimensional subspaces of $\LL^2(G)$. These subspaces are defined as the spans of eigenfunctions of the Laplacian corresponding to eigenvalues that are less than certain finite values.

By a result of Dolgopyat \cite{Dolgo}, we know that if we pick two independent random elements from the Haar measure of a compact connected Lie group $G$, then the subgroup generated by these elements and their inverses is dense in $G$ almost surely. Unfortunately, the rate at which the random point set corresponding to words of a fixed length approaches $G$ in Hausdorff distance, as guaranteed by \cite{Dolgo} is far from the rate that one would obtain if the corresponding Markov operator had a spectral gap for its action on $\LL^2(G)$. For this reason, the results of this paper do not follow. For the case 
$G = SU_n$, Bourgain and Gamburd proved \cite{BG2} the existence of a spectral gap provided the entries of the generators are algebraic and the subgroup they generate is dense in $G$. There is a long line of work that this relates to, touching upon approximate subgroups and pseudorandomness, for which we direct the reader to the references in \cite{BG2}.
The question of a spectral gap when $G$ is $SU_2$ for random generators of the kind we consider was reiterated by Bourgain and Gamburd in \cite{BG1}, being first raised by Lubotzky, Philips and Sarnak \cite{Lub} in 1987 and is still open. In the setting of $SU_2$, our results can be viewed as addressing a quantitative version of a weak variant of this question.

\section{Analysis on a compact Lie group}

%One particular Riemannian metric is obtained using the Killing form \cite{Gurarie}, but any left-invariant Riemannian  %metric would suit our purposes.

%Let $\C_\eps \geq \frac{2\vol G}{\vol B(r, o)}$ where $B(r, o)$ is the metric ball of radius $r$ around the origin,
% $5  \eps  \sqrt{n \ln \frac{4^{1/n} C_G}{ \eps}} =: r$.  We choose $\ell \geq \ln (\sqrt{\vol G} C_\eps).$% 
%We note that by known results (see for example Gray \cite{gray}), \beq \lim_{r \ra 0} \frac{\vol(B(r, o))}{r^n} = \vol(B_n), %\eeq  where $B(r, o)$ is the metric ball of radius $r$ around the origin.

Suppose $F_1, F_2, \dots$ are eigenspaces of the Laplacian on $G$ corresponding to eigenvalues $0 = \la_0 < \la_1 <  \la_2 < \dots$. Let $f_i^1, \dots, f_i^j, \dots$ be an orthonormal basis for $F_i$, for each $i \in \N$.
$G$ acts on  functions in $\LL^2(G)$ via $T_g$, the translation operator,
$$T_g f (x) = f(g^{-1} x).$$
Thus each $F_i$ is a representation of $G$, though not necessarily an irreducible representation.
% It is known that there exists a constant $C$ such that every unit $\LL_2-$norm eigenfunction in $F_i$ has an $\LL_\infty-$norm that is bounded above by $C\la_i$.

As stated in the introduction, let the Heat kernel at $x$  corresponding to Brownian motion on $G$ with respect to the metric $d$ started at the origin $o \in G$ for time $t$ be $H_t(x)$. When we wish to change the starting point for the diffusion, we will denote by $H(x, y, t)$ the probability density of Brownian motion started at $x$ at time zero ending at $y$ at time $t$.

The following is a theorem of Minakshisundaram and Pleijel \cite{Minakshisundaram1, Minakshisundaram}.
\begin{theorem}\lab{Minak}
For each $x \in G$  there is an asymptotic expansion
$$ H( x, x, t) \sim t^{-n/2}(a_0(x) + a_1(x)t + a_2(x)t^2 + \dots),$$ $t \ra 0$.
The $a_j$ are smooth functions on $G$.
\end{theorem}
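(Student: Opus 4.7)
The plan is to build a formal asymptotic parametrix for the heat kernel in geodesic normal coordinates around each point and then convert it into a genuine approximation to $H$ via Duhamel's principle. Since $G$ is compact, it has a uniform injectivity radius $\rho>0$; for each $x \in G$ the exponential map is a diffeomorphism from $B_\rho(0) \subset T_x G$ onto its image, and in these coordinates the Riemannian volume reads $\Theta(x,y)\,dy$ with $\Theta(x,x)=1$ and $\Theta$ smooth, while the Laplacian $\Delta_y$ splits as the flat Laplacian plus smooth first- and zeroth-order corrections expressible in terms of the metric coefficients and their derivatives.

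In this chart I would postulate
\[
P_N(x,y,t) \;=\; (4\pi t)^{-n/2}\, e^{-d(x,y)^2/(4t)} \sum_{j=0}^{N} u_j(x,y)\, t^j,
\]
substitute into the heat equation $(\partial_t - \Delta_y)P_N = 0$, and match powers of $t$. The eikonal identity $|\nabla_y d(x,y)|^2 = 1$ (valid inside the injectivity radius) cancels the most singular terms, reducing the hierarchy to a family of first-order transport equations along radial geodesics issuing from $x$. Solving the $j=0$ equation gives $u_0(x,y) = \Theta(x,y)^{-1/2}$, and each subsequent $u_{j+1}$ is obtained by a single ODE integration along each radial geodesic; smoothness of $\Theta$ and of the correction operators propagates to all the $u_j$ on the tube $\{d(x,y) < \rho\}$, and these $u_j$ are then smooth functions of both arguments.

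To upgrade the formal parametrix to an actual approximation, multiply $P_N$ by a smooth cutoff $\chi(d(x,y))$ supported strictly inside the injectivity radius, producing a globally defined $\tilde P_N$ on $G \times G \times (0,T]$. A direct computation gives $(\partial_t - \Delta_y)\tilde P_N = R_N$ with $R_N(x,y,t) = O(t^{N - n/2})$ uniformly in $(x,y)$, the cutoff contributing only exponentially small errors because $\chi'$ is supported where $d(x,y)$ is bounded below. Duhamel's formula then writes the true heat kernel as $H = \tilde P_N + H * R_N$ (space-time convolution), and standard Gaussian upper bounds for $H$ on a compact manifold allow one to bound the convolution and conclude $H - \tilde P_N = O(t^{N - n/2 + 1})$ uniformly. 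Setting $y = x$ makes the exponential factor equal to $1$, so one reads off $a_j(x) = (4\pi)^{-n/2}\, u_j(x,x)$; these are smooth because the $u_j$ are, and letting $N$ grow gives the full asymptotic expansion.

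The hard part is the Duhamel step: one must show that the convolution $H * R_N$ really gains one full factor of $t$ relative to $R_N$ while preserving smoothness on the diagonal, which demands careful application of pointwise Gaussian bounds for $H(x,y,t)$, control of the size of $\nabla_y^k u_j$, and an iteration argument to push the remainder down to any desired order. The rest of the argument is algebraic and, although notationally heavy, reduces to the formal computation with the parametrix ansatz.
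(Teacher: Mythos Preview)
Your outline is the classical Minakshisundaram--Pleijel parametrix construction and is correct as a sketch: the ansatz, the transport hierarchy for the $u_j$, the cutoff, and the Duhamel comparison are exactly the standard ingredients, and the identification $a_j(x) = (4\pi)^{-n/2} u_j(x,x)$ is right. One small caveat: in the Duhamel step you invoke ``standard Gaussian upper bounds for $H$ on a compact manifold'' to control the convolution, but in the paper's logical order those bounds (Corollary~\ref{thm:CLY}) are \emph{derived from} the present theorem via Grigor'yan's comparison, so if you are proving this theorem from scratch you should instead appeal to the cruder a priori bound $H(x,y,t) \le H(x,x,t)^{1/2} H(y,y,t)^{1/2}$ together with a short-time on-diagonal estimate obtained directly from the parametrix, or simply iterate the Volterra series for $H * R_N$ and use that $R_N$ itself is $O(t^{N-n/2})$ with Gaussian spatial decay.

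As for comparison with the paper: there is nothing to compare. The paper does not prove this statement; it quotes it as a theorem of Minakshisundaram and Pleijel with references to the original sources, and then uses only the leading-order consequence $H(x,x,t) \asymp t^{-n/2}$ as input to Grigor'yan's off-diagonal bound. So your proposal supplies a genuine proof where the paper is content to cite one; for the purposes of this paper the citation suffices, since only the $t^{-n/2}$ scaling is ever used downstream.
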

Since $G$ is equipped with a left invariant metric, the $a_j(x)$ are constant functions.
We will use the following theorem of Grigoryan from \cite{Grigor}, where it appears as Theorem 1.1.

\begin{theorem}\lab{thm:grig}
Assume that for some points $x, y \in M$ and for all $t \in (0, T) $, 
$$p_t(x, x) \leq \frac{C_1}{\gamma_1(t)},$$ and $$p_t(y, y) \leq \frac{C_1}{\gamma_2(t)},$$ where $\gamma_1$ and $\gamma_2$ are increasing positive functions on $\mathbb{R}_+$ both satisfying
\beq \frac{\gamma_i(at)}{\gamma_i(t)} \leq A \frac{\gamma_i(as)}{\gamma_i(s)}\eeq for all $0 < t \leq s < T,$ for some constants $a, A > 1.$ Then for any $C > 4$ and all $t \in (0, T),$ 
\beq p_t(x, y) \leq \frac{C_2}{\sqrt{\gamma_1(\epsilon t)\gamma_2(\epsilon t)}}\exp\left(- \frac{d^2(x, y)}{Ct}\right)\eeq
for some $\epsilon = \epsilon(a, C) > 0.$

\end{theorem}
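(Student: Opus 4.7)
The plan is to combine a Cauchy--Schwarz semigroup estimate with Grigoryan's exponential-weight trick (an $\LL^2$ version of the Davies method). The hypothesis gives on-diagonal bounds; the semigroup identity will yield the pre-factor, and an integrated maximum principle will conjure the Gaussian factor $\exp(-d^2(x,y)/Ct)$.

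First, I would establish the crude off-diagonal bound without exponential decay. Writing $p_t(x,y) = \int_M p_{t/2}(x,z)p_{t/2}(z,y)\,dz$ and applying Cauchy--Schwarz together with the symmetry $\int p_{t/2}(x,z)^2\,dz = p_t(x,x)$, we obtain
\beqs p_t(x,y) \le \sqrt{p_t(x,x)\, p_t(y,y)} \le \frac{C_1}{\sqrt{\gamma_1(t)\gamma_2(t)}}. \eeqs
This already produces the claimed pre-factor (with $\epsilon=1$) but with no Gaussian factor.

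Next, to sharpen this into a Gaussian bound, I would introduce a Lipschitz weight $\xi(z) = \alpha\,d(x,z)$ with parameter $\alpha>0$ to be chosen, and track the weighted energy
\beqs J_x(s) := \int_M e^{2\xi(z)}\, p_s(x,z)^2\, dz. \eeqs
A direct computation (differentiating in $s$, integrating by parts, and using $|\nabla \xi| \le \alpha$) gives
\beqs \frac{d}{ds} J_x(s) \le 2\alpha^2\, J_x(s), \eeqs
so $J_x(s) \le e^{2\alpha^2 s} J_x(0^+) \le C_1 e^{2\alpha^2 s}/\gamma_1(2s)$ after passing to the diagonal via $J_x(0^+) = p_{0}(x,x) \cdot (\text{weight at } x)$ and regularizing; after a Cauchy--Schwarz and a symmetric argument at $y$ with weight $\tilde\xi(z) = -\alpha\, d(x,z)$, one gets
\beqs p_t(x,y) \le \frac{C'}{\sqrt{\gamma_1(t/2)\gamma_2(t/2)}} \exp\!\Bigl(\alpha^2 t - \alpha\, d(x,y)\Bigr). \eeqs
Optimizing over $\alpha$ yields $\alpha = d(x,y)/(2t)$ and the exponent $-d^2(x,y)/(4t)$; choosing $\alpha$ slightly suboptimally gives any constant $C>4$ in place of $4$, at the cost of enlarging the pre-factor constant.

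Finally, to replace $\gamma_i(t/2)$ by $\gamma_i(\epsilon t)$ as claimed, I would invoke the doubling-type hypothesis $\gamma_i(at)/\gamma_i(t) \le A\,\gamma_i(as)/\gamma_i(s)$: iterating this a bounded number of times (depending on $a, A, C$) shows that $\gamma_i(t/2) \ge c(a,A,C)\,\gamma_i(\epsilon t)$ for a suitable $\epsilon = \epsilon(a,C) > 0$, absorbing the loss into the constant $C_2$. The main obstacle is the careful handling of this weighted-energy differentiation and of the regularization at $s=0^+$; everything else is bookkeeping. The doubling assumption is precisely what makes the final swap of time scales legitimate.
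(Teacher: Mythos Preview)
The paper does not prove this theorem; it is quoted verbatim as Theorem~1.1 of Grigor'yan \cite{Grigor} and used as a black box. Your sketch is essentially the standard Davies--Grigor'yan weighted $\LL^2$ argument that underlies Grigor'yan's original proof, so in that sense you are reproducing the cited reference rather than comparing against anything in this paper.

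One substantive remark: the step ``$J_x(0^+) = p_0(x,x)\cdot(\text{weight at }x)$'' is not well-posed, since $p_0(x,\cdot)=\delta_x$ makes $J_x(0^+)$ infinite. The correct bookkeeping is to split $t$ into two pieces, use the \emph{unweighted} on-diagonal bound $\int p_{t_0}(x,z)^2\,dz = p_{2t_0}(x,x)\le C_1/\gamma_1(2t_0)$ at an intermediate time $t_0>0$ with a weight $\xi$ chosen so that $e^{2\xi}\le 1$ near $x$ (e.g.\ $\xi(z)=-\alpha\,d(x,z)$, truncated to be bounded), and only then run the differential inequality forward to time $t/2$. The Cauchy--Schwarz pairing at $y$ uses the opposite-sign weight, and the cross term produces $e^{-\alpha d(x,y)}$. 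You flagged this regularization as the main obstacle, and it is; once handled, the rest of your outline (optimizing $\alpha$, then using the regularity hypothesis on $\gamma_i$ to trade $t/2$ for $\epsilon t$) is correct.
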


It follows from Theorem~\ref{Minak} that for some sufficiently small time $T>0$, we can choose $\gamma_1(t) = \gamma_2(t) = (\frac{1}2)t^{n/2}$ for $ t \in (0, T)$ in Theorem~\ref{thm:grig}.

This gives us the following corollary.

\begin{cor}\lab{thm:CLY}
For any constant $C > 4$, there exists $T>0$ and $C_1$ depending on $G$ and  $C$ so that for all $t \in (0, T)$
 $$H(x, y, t) \leq C_1t^{-n/2} \exp (-\frac{r^2}{Ct})$$
 where $n$ is the dimension of $G$ and $r$ is the distance between $x$ and $y$.
\end{cor}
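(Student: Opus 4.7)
The plan is to feed the hypotheses of Theorem~\ref{thm:grig} using the on‐diagonal bound furnished by Theorem~\ref{Minak}, and then read off the off‐diagonal estimate.

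First, I would exploit left‐invariance: since $d$ is left‐invariant, the coefficients $a_j(x)$ in Minakshisundaram--Pleijel are constants, so the on‐diagonal asymptotic
\[
H(x,x,t) \sim t^{-n/2}\bigl(a_0 + a_1 t + a_2 t^2 + \cdots\bigr)
\]
holds uniformly in $x \in G$. In particular there exist $T_0 > 0$ and a constant $M$, depending only on $(G,d)$, such that $H(x,x,t) \leq M t^{-n/2}$ for all $x \in G$ and all $t \in (0, T_0)$. This is precisely the hypothesis $p_t(x,x) \leq C_1/\gamma_1(t)$ and $p_t(y,y) \leq C_1/\gamma_2(t)$ of Theorem~\ref{thm:grig} with
\[
\gamma_1(t) = \gamma_2(t) = \tfrac{1}{2}\, t^{n/2}, \qquad C_1 = 2M.
\]

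Next I would verify the scaling hypothesis. With $\gamma(t) = \tfrac{1}{2} t^{n/2}$, the ratio $\gamma(at)/\gamma(t) = a^{n/2}$ depends only on $a$, so
\[
\frac{\gamma(at)}{\gamma(t)} = \frac{\gamma(as)}{\gamma(s)}
\]
for every $0 < t \leq s$. Hence the doubling‐type inequality in Theorem~\ref{thm:grig} holds with any choice of $a > 1$ and any $A > 1$ (in fact with $A = 1$, but any strict inequality works).

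Given $C > 4$, apply Theorem~\ref{thm:grig} to produce $\epsilon = \epsilon(a, C) > 0$ and a constant $C_2$ such that, for all $t \in (0, T)$ with $T \leq T_0$ and all $x, y \in G$,
\[
H(x,y,t) \;\leq\; \frac{C_2}{\sqrt{\gamma_1(\epsilon t)\,\gamma_2(\epsilon t)}} \exp\!\left(-\frac{d^2(x,y)}{Ct}\right) = \frac{2 C_2\, \epsilon^{-n/2}}{t^{n/2}} \exp\!\left(-\frac{r^2}{Ct}\right),
\]
where $r = d(x,y)$. Absorbing $2 C_2 \epsilon^{-n/2}$ into a single constant $C_1$ (renamed), and shrinking $T$ if necessary, yields the stated bound. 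There is no genuine obstacle here: the only substantive point is the observation that left‐invariance upgrades the pointwise Minakshisundaram--Pleijel asymptotic to a uniform on‐diagonal bound, after which Theorem~\ref{thm:grig} does the work.
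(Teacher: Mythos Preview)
Your proposal is correct and follows exactly the approach the paper uses: the paper simply remarks that Theorem~\ref{Minak} (with left-invariance making the $a_j$ constant) allows one to take $\gamma_1(t)=\gamma_2(t)=\tfrac{1}{2}t^{n/2}$ in Theorem~\ref{thm:grig}, which immediately yields the corollary. You have filled in the routine verification of the regularity hypothesis on $\gamma$ and the constant bookkeeping, but the route is identical.
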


\begin{lemma}\label{lem:1}Let $\eta > 0$.
We  take $ \eps  \sqrt{ 5\ln \frac{1}{\eta \eps^n}} = r$. 
If we choose $t = \eps^2$, then, 

 for all $y$ such that $$d(x, y) > r,$$ we have
$$H(x, y, t) < C_G{\eta}.$$
\end{lemma}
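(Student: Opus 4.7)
The plan is a direct substitution into Corollary~\ref{thm:CLY}. The constant $5$ appearing in the definition $r = \eps\sqrt{5\ln\frac{1}{\eta\eps^n}}$ is not arbitrary; it is tuned so that the choice $C = 5$ in the corollary (legitimate since $5 > 4$) produces an exponential factor that exactly cancels the polynomial prefactor $t^{-n/2}$. Concretely, applying Corollary~\ref{thm:CLY} with $C = 5$ yields constants $C_1 = C_1(G)$ and $T = T(G) > 0$ such that
\begin{equation*}
H(x, y, t) \;\leq\; C_1 \, t^{-n/2} \exp\!\left(-\frac{d(x,y)^2}{5t}\right) \qquad \text{for all } t \in (0, T).
\end{equation*}

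I would then set $t = \eps^2$, implicitly in the regime $\eps^2 < T$ (in the complementary regime $\eps^2 \geq T$, the lemma is trivial after enlarging $C_G$, since $H$ is uniformly bounded on $G \times G \times [T/2, \infty)$ by compactness and smoothing of the heat semigroup). Because the right-hand side above is monotone decreasing in $d(x,y)$, any $y$ with $d(x,y) > r$ admits the bound
\begin{equation*}
H(x, y, \eps^2) \;\leq\; C_1 \, \eps^{-n} \exp\!\left(-\frac{r^2}{5\eps^2}\right).
\end{equation*}

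Substituting $r^2 = 5\eps^2 \ln\frac{1}{\eta\eps^n}$ gives $r^2/(5\eps^2) = \ln\frac{1}{\eta\eps^n}$, hence $\exp(-r^2/(5\eps^2)) = \eta \eps^n$. The prefactor $\eps^{-n}$ is thereby cancelled, leaving $H(x, y, \eps^2) \leq C_1 \eta$. Taking $C_G := C_1$ (augmented if necessary to absorb the boundary regime $\eps^2 \geq T$) yields the claim. I do not anticipate a substantive obstacle: the only point requiring care is the coordination between the constant $5$ in the definition of $r$ and the constant $C > 4$ in Corollary~\ref{thm:CLY}, which is resolved by the specific choice $C = 5$; every other step is bookkeeping.
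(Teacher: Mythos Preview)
Your proposal is correct and follows essentially the same route as the paper: apply Corollary~\ref{thm:CLY} with $C=5$, set $t=\eps^2$, and let the exponential factor $\exp(-r^2/(5\eps^2))=\eta\eps^n$ cancel the $\eps^{-n}$ prefactor. Your handling of the regime $\eps^2\geq T$ via a uniform bound on $H$ is in fact more careful than the paper, which simply takes $T=1$ without comment.
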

\begin{proof}
 In Corollary~\ref{thm:CLY},  we may set $C = 5$ and  $T = 1$ and ignore the dependence in $x$ since the metric is left invariant.  For all $ t \leq \eps^2$ and all $y$ such that $$d(x, y) >  r,$$
\beq H(x, y, t) & < &  C_1 \eps^{-n}  ( \exp(- \ln \frac{1}{\eta \eps^n})) \\
& < &                 C_1 {\eta}.\eeq

\end{proof}

By Weyl's law for the eigenvalues of the Laplacian on a Riemannian manifold as proven by Duistermaat and Guillimin \cite{DG}, we have the following.
\begin{theorem}\label{thm:weyl}
$$\lim_{\la \ra \infty} \frac{\la^{n/2}}{ \sum_{\la_i \leq \la}\dim F_i} = \frac{\vol(B_n)\vol(G)}{(2\pi)^n} =: C_2,$$ where $C_2$ is a constant depending only on volume and dimension $n$ of the Lie group.
\end{theorem}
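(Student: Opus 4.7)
The plan is to route through the heat trace and a Tauberian theorem. First, one integrates the diagonal asymptotic of Theorem~\ref{Minak} over $G$. Since the metric is left-invariant the coefficients $a_j(x)$ are constant, and the universal leading coefficient is $a_0 = (4\pi)^{-n/2}$ (obtained by a local parametrix argument: in geodesic normal coordinates at $x$ the Riemannian Laplacian agrees with the Euclidean Laplacian up to second-order corrections, so the short-time behaviour of $H(x,x,t)$ reduces to that of the Euclidean heat kernel at the origin). Integrating,
\[
Z(t) \;:=\; \sum_{i \ge 0}(\dim F_i)\,e^{-\la_i t} \;=\; \int_G H(x,x,t)\,dx \;\sim\; \frac{\vol(G)}{(4\pi t)^{n/2}} \qquad\text{as } t\to 0^+.
\]

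Next I would apply Karamata's Tauberian theorem. Writing $N(\la) := \sum_{\la_i \le \la}\dim F_i$, the heat trace is precisely the Laplace-Stieltjes transform $Z(t) = \int_0^\infty e^{-\la t}\,dN(\la)$. Since $N$ is monotone nondecreasing and $Z(t) \sim c\,t^{-n/2}$ with $c = \vol(G)/(4\pi)^{n/2}$, Karamata's theorem upgrades this to
\[
N(\la) \;\sim\; \frac{c}{\Gamma(n/2+1)}\,\la^{n/2} \qquad \text{as } \la \to \infty.
\]
Using $\vol(B_n) = \pi^{n/2}/\Gamma(n/2+1)$, the prefactor simplifies to $\vol(B_n)\vol(G)/(2\pi)^n = C_2$, matching the claimed limit.

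The main obstacle, in my view, is not the Tauberian step (entirely standard) but rather pinning down the leading heat-trace coefficient $a_0 = (4\pi)^{-n/2}$, which demands the local parametrix argument sketched above; this is really where the Riemannian geometry enters. The sharper Duistermaat-Guillemin result additionally supplies an $O(\la^{(n-1)/2})$ remainder under a suitable geodesic hypothesis via a Fourier-integral operator analysis of the half-wave propagator, but since the statement here is only the bare leading-order asymptotic, the combination of Minakshisundaram-Pleijel and Karamata suffices.
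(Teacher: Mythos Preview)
The paper does not supply its own proof of this statement; it simply records Weyl's law with a citation to Duistermaat--Guillemin \cite{DG}. Your route via the heat trace and Karamata's Tauberian theorem is correct and is in fact the classical Minakshisundaram--Pleijel approach to the leading-order Weyl asymptotic; it has the advantage of staying entirely within the tools already introduced in the paper (Theorem~\ref{Minak}), whereas the cited Duistermaat--Guillemin argument proceeds through a Fourier-integral-operator analysis of the half-wave propagator. As you observe, the latter buys the sharper $O(\la^{(n-1)/2})$ remainder, but only the bare leading term is needed here, so your more elementary argument suffices.

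One small observation: as literally written, the displayed limit is $\la^{n/2}/N(\la)$, whose value should be the \emph{reciprocal} of $\vol(B_n)\vol(G)/(2\pi)^n$; indeed the paper itself, in the proof of Lemma~\ref{lem:7}, writes the same quantity as $C_2^{-1}$. Your computation correctly yields $N(\la)\sim \frac{\vol(B_n)\vol(G)}{(2\pi)^n}\,\la^{n/2}$, which is the form actually used downstream, so this is a typographical inconsistency in the statement rather than a flaw in your argument.
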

This has the following corollary, which is improved upon by Theorem~\ref{thm:g} below.
\begin{corollary}\label{cor3}
$$\sup_{i \geq 1} \frac{\dim F_i}{ \la_i^{n/2}} = C_3,$$ where $C_3$ is a finite constant depending only on the Lie group and its metric.
\end{corollary}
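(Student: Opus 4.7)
The plan is to deduce the uniform bound from Weyl's law (Theorem~\ref{thm:weyl}) by combining an asymptotic bound valid for large eigenvalues with the trivial fact that only finitely many eigenvalues can fail this bound.

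First I would set $N(\la) := \sum_{\la_i \leq \la} \dim F_i$ so that Theorem~\ref{thm:weyl} reads $\lim_{\la \to \infty} N(\la)/\la^{n/2} = C_2$. Consequently there exists a threshold $\la^\ast > 0$ such that for every $\la \geq \la^\ast$ one has $N(\la) \leq 2 C_2 \la^{n/2}$. Since $\dim F_i \leq N(\la_i)$, applying this with $\la = \la_i$ yields
\[
\frac{\dim F_i}{\la_i^{n/2}} \leq 2 C_2 \qquad \text{for all } i \text{ with } \la_i \geq \la^\ast.
\]

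Next I would handle the small eigenvalues. Because $0 = \la_0 < \la_1 < \la_2 < \cdots$ is strictly increasing and unbounded, only finitely many indices $i \geq 1$ satisfy $\la_i < \la^\ast$, and for each such $i$ the denominator $\la_i^{n/2}$ is strictly positive while the numerator $\dim F_i$ is finite (each eigenspace of the Laplacian on a compact manifold is finite-dimensional). Hence
\[
M := \max \Bigl\{ \tfrac{\dim F_i}{\la_i^{n/2}} : 1 \leq i,\ \la_i < \la^\ast \Bigr\}
\]
is a finite quantity depending only on $G$ and its metric.

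Setting $C_3 := \max(2 C_2, M)$ then gives the claimed uniform bound $\dim F_i / \la_i^{n/2} \leq C_3$ for every $i \geq 1$, which is precisely the statement of the corollary. There is no real obstacle here: the only thing to watch is the separation of the indices into an asymptotic regime controlled by Weyl's law and a finite regime controlled by finiteness of eigenspaces and positivity of $\la_i$ for $i \geq 1$.
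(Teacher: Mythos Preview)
Your argument is correct and is exactly the implicit reasoning behind the paper's labeling of this as a corollary of Weyl's law (the paper gives no separate proof). One trivial slip: as stated in Theorem~\ref{thm:weyl} it is the ratio $\la^{n/2}/N(\la)$ that tends to $C_2$, so your asymptotic bound should read $N(\la)\le (2/C_2)\,\la^{n/2}$ rather than $2C_2\,\la^{n/2}$; this of course does not affect the argument.
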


The following theorem is due to Donnelly (Theorem 1.2, \cite{donnelly}). 

\begin{theorem}\lab{thm:g} 
Let $M$ be a compact $n-$dimensional Riemannian manifold and $\Delta$ its Laplacian acting on functions. Suppose that the injectivity radius of $\MM$ is bounded below by $c_4$ and that the absolute value of the sectional curvature is bounded above by $c_5$. If $\Delta \phi = - \la \phi$ and $\la \neq 0$, then $\|\phi\|_\infty \leq c_2 \la^{\frac{(n-1)}{4}}\|\phi\|_2.$ The constant $c_2$ depends only upon $c_4, c_5,$ and the dimension $n$ of $\MM$. Moreover the multiplicity $m_\la \leq c_3 \la^{\frac{(n-1)}{2}}$ where $c_3$ depends only on $c_2$ and an upper bound for the volume of $\MM$.
\end{theorem}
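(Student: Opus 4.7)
The proof strategy I would pursue centers on bounding the reproducing kernel of the eigenspace $E_\lambda$. Let $K_\lambda(x,y) = \sum_{j : \lambda_j = \lambda} \phi_j(x)\overline{\phi_j(y)}$ where $\{\phi_j\}$ is an $\LL^2$-orthonormal basis of $E_\lambda$. Two elementary observations reduce both claims to a single pointwise bound on $K_\lambda(x,x)$: first, for any unit $\phi = \sum c_j \phi_j \in E_\lambda$, Cauchy--Schwarz gives $|\phi(x)|^2 \le (\sum|c_j|^2)(\sum|\phi_j(x)|^2) = K_\lambda(x,x)$, and second, integration gives $m_\lambda = \int_M K_\lambda(x,x)\, dV$. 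Thus it suffices to prove $K_\lambda(x,x) \le C\lambda^{(n-1)/2}$ with $C$ depending only on $n$, $c_4$, $c_5$.

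To obtain this bound, I would use the wave-kernel method of H\"ormander. Choose a Schwartz function $\chi \ge 0$ on $\mathbb{R}$ whose Fourier transform $\hat\chi$ is smooth, even, and supported in $(-c_4/2,c_4/2)$, and with $\chi \ge 1$ on a neighborhood of $\sqrt\lambda$. The operator $\chi(\sqrt{-\Delta}-\sqrt\lambda)$ dominates the projection onto $E_\lambda$ on the diagonal, and its Schwartz kernel can be written as
\[ \int \hat\chi(t)\, e^{-it\sqrt\lambda}\, \cos(t\sqrt{-\Delta})(x,x)\, dt. \]
By finite propagation speed and the support condition on $\hat\chi$, this kernel depends only on the metric inside the injectivity radius. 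Substituting the Hadamard parametrix for $\cos(t\sqrt{-\Delta})$, the leading-order stationary-phase analysis yields a diagonal bound of order $\lambda^{(n-1)/2}$, with the constants determined by the principal Hadamard coefficient and a finite number of its derivatives.

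The main obstacle will be making the Hadamard parametrix quantitative purely in terms of $c_4$ and $c_5$. Constructing the parametrix amounts to solving transport equations along radial geodesics for the coefficients, and the needed bounds reduce to estimates on Jacobi fields that follow from Rauch comparison with the model space of constant sectional curvature $\pm c_5$; the injectivity radius $c_4$ is what ensures the construction is valid on a uniform neighborhood of the diagonal, so that the $\hat\chi$ support restriction and finite propagation speed close the argument.

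As a sanity check, note that the easier heat-equation approach --- plugging Corollary~\ref{thm:CLY} into the spectral expansion $H(x,x,t)=\sum_j e^{-\lambda_j t}|\phi_j(x)|^2$ to get $e^{-\lambda t}K_\lambda(x,x)\le C_1 t^{-n/2}$ and optimizing at $t=n/(2\lambda)$ --- yields only $K_\lambda(x,x)\le C\lambda^{n/2}$, i.e.\ $\|\phi\|_\infty \le C'\lambda^{n/4}\|\phi\|_2$. Donnelly's improvement by a factor of $\lambda^{1/4}$ is precisely the gain that the finite propagation speed of the wave equation affords over the diffusive spreading of the heat equation, and this is why the wave-kernel framework is unavoidable.
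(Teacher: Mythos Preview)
The paper does not prove this theorem; it is quoted verbatim as Theorem~1.2 of Donnelly~\cite{donnelly}, with the remark that H\"{o}rmander~\cite{Hormander} had the result earlier without the explicit geometric dependence of the constants. There is therefore no proof in the paper to compare your proposal against.

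That said, your outline is essentially the H\"{o}rmander--Donnelly argument itself: the reduction to a diagonal bound on the spectral-cluster kernel $K_\lambda(x,x)$ via Cauchy--Schwarz and integration is exactly how both the sup-norm and multiplicity bounds are obtained in the original, and the wave-kernel/finite-propagation-speed localization followed by the Hadamard parametrix is precisely Donnelly's mechanism for making the constants depend only on injectivity radius and curvature bounds. Your identification of the key technical point --- that Rauch comparison controls the Hadamard coefficients uniformly in $c_4,c_5$ --- is also on the mark. The heat-kernel sanity check at the end is correct and nicely isolates why the wave method is needed for the sharp exponent $(n-1)/4$. In short: your proposal is a faithful sketch of the cited proof, not an alternative to anything in the present paper.
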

H\"{o}rmander \cite{Hormander} proved this result earlier without specifying which geometric parameters the constants  depended upon. 
%We also use the following result of Yunqian Shi and Bin Xu \cite{SX}.
Then, by the Fourier expansion of the heat kernel into eigenfunctions of the Laplacian,
$$H_t = \sum_{\la_i \geq 0}\sum_{j} a_{ij}  f_{ij} .$$
where $a_{ij} = e^{- \la_i t} f_{ij}(0) \leq  e^{- \la_i t} (c_2 \la_i^{\frac{n-1}{4}}),$ where the $f_{ij}$  for $ j \in [1, \dim F_i] \cap \N $, form an orthonormal basis of $F_i$.  
Let $$\tilde{H}_{t, M}(y) = \sum_{0 < \la_i \leq M}\sum_{j} a_{ij} f_{ij},$$ and
 $${H}_{t, M}(y) = \sum_{0 \leq \la_i \leq M}\sum_{j} a_{ij} f_{ij},$$
\begin{lemma}\lab{lem:6} For any $M > 0$, \beq \|\tilde{H}_{t, M}\|_{\LL^2} <  C_G t^{-n/4}\eeq
\end{lemma}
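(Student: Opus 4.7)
The plan is to use Parseval/Plancherel together with the well-known identity expressing the heat kernel on the diagonal as a sum of squares of eigenfunctions, then invoke Corollary~\ref{thm:CLY} with $x=y=o$ to get the on-diagonal upper bound.

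Concretely, I would first note that because $\{f_{ij}\}$ is an orthonormal system in $\LL^2(G)$, Parseval gives
\beqs
\|\tilde H_{t,M}\|_{\LL^2}^2 \;=\; \sum_{0 < \la_i \leq M}\sum_j a_{ij}^2 \;=\; \sum_{0 < \la_i \leq M}\sum_j e^{-2\la_i t}\,f_{ij}(o)^2 .
\eeqs
Next I would recall the spectral expansion of the heat kernel on the diagonal:
\beqs
H(o,o,2t) \;=\; \sum_{i \geq 0}\sum_j e^{-2\la_i t}\,f_{ij}(o)^2 ,
\eeqs
which comes from writing $H_{2t} = \sum e^{-2\la_i t} f_{ij}(o) f_{ij}$ (the Fourier expansion already used just before the lemma) and evaluating at $o$. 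Since each summand is nonnegative, dropping the $\la_0 = 0$ term and truncating at $\la_i \leq M$ can only decrease the sum, so
\beqs
\|\tilde H_{t,M}\|_{\LL^2}^2 \;\leq\; H(o,o,2t).
\eeqs

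Finally I would apply Corollary~\ref{thm:CLY} with $x=y=o$ (so $r=0$) and $t$ replaced by $2t$, yielding $H(o,o,2t) \leq C_1 (2t)^{-n/2}$ for all sufficiently small $t$; absorbing constants into $C_G$ gives $\|\tilde H_{t,M}\|_{\LL^2}^2 \leq C_G t^{-n/2}$ and hence $\|\tilde H_{t,M}\|_{\LL^2} < C_G t^{-n/4}$, as required. The only mildly subtle point is the range of validity of $t$: Corollary~\ref{thm:CLY} is stated for $t \in (0,T)$, so strictly speaking the bound is meaningful only for small $t$, but this is the only regime in which $t^{-n/4}$ is a nontrivial upper bound anyway, and for larger $t$ one can enlarge $C_G$ since the exponential factors $e^{-2\la_i t}$ together with Weyl's law (Theorem~\ref{thm:weyl}) make the sum uniformly bounded. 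I do not expect any genuine obstacle; the lemma is essentially a two-line consequence of Parseval plus the diagonal heat kernel bound, and Donnelly's pointwise bound (Theorem~\ref{thm:g}) is not needed here.
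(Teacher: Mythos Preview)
Your argument is correct, and it is genuinely different from the paper's. Both proofs begin with the same observation that $\|\tilde H_{t,M}\|_{\LL^2}\leq\|H_t\|_{\LL^2}$, but they diverge in how $\|H_t\|_{\LL^2}$ is controlled. The paper bounds $\int_G H_t(y)^2\,\mu(dy)$ directly in physical space: it splits $G$ into a ball $B(o,r)$ and its complement, uses Lemma~\ref{lem:1} (i.e.\ the off-diagonal Gaussian upper bound of Corollary~\ref{thm:CLY}) to show $H_t<C_G$ outside the ball, and inside the ball integrates the Gaussian upper bound against an almost-Euclidean volume element. Your route is purely spectral: Parseval gives $\|H_t\|_{\LL^2}^2=\sum_{i,j}e^{-2\la_i t}f_{ij}(o)^2=H(o,o,2t)$, and then only the \emph{on-diagonal} bound is needed, which already follows from the Minakshisundaram--Pleijel expansion (Theorem~\ref{Minak}) without invoking Grigoryan's off-diagonal estimate at all. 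Your handling of large $t$ is also fine; one can simply note that $H(o,o,2t)$ is nonincreasing in $t$ from the spectral sum, so a bound on $(0,T)$ extends to all $t>0$ after enlarging $C_G$. The paper's approach has the minor advantage of rehearsing the off-diagonal machinery that is genuinely needed later (Lemma~\ref{lem:9}), but for this lemma in isolation your proof is shorter and uses strictly less input.
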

\begin{proof}
We note that \beq \|\tilde{H}_{t, M}\|_{\LL^2} \leq \|H_t\|_{\LL^2},\eeq because $\tilde{H}_{t, M}$ is the image of $H_t$ under a projection (with respect to $\LL^2$) onto a  subspace spanned by the eigenfunctions of the Laplacian corresponding to eigenvalues in the range $(0, M]$. Thus it suffices to bound $\|H_t\|_{\LL^2}$ from above in the appropriate manner. Choosing $\eta = 1 $ in Lemma~\ref{lem:1}, we see that if
we  take $ \eps  \sqrt{ 5 \ln (\eps^{-n})} = r$ and  $t = \eps^2$, then, 

 for all $y$ such that $$d(x, y) > r,$$ we have
$$H(x, y, t) < C_G.$$ Let $\mu_n$ denote the Lebesgue measure on $\R^n$ and $\mu$ the volume measure on $G$. We next need an upper bound on $\int_{B(o,r)} H_t(y)^2 \mu(dy).$ Note that when $\eps$ is sufficiently small, $B(o, r)$ is almost isometric via the exponential map to a Euclidean ball of radius $r$ in $\R^n$. 
Further, it is known that \beq \sqrt{\det g_{ij}(\exp_x(\a v))} = 1 - \frac{1}{6} Ric^g(v, v) \a^2 + o(\a^2), \eeq
where $Ric$ denotes the Ricci tensor, and $\exp_x$, the exponential map at $x$.

 Thus, 
 \beqs \int_{B(o,r)} H_t(y)^2 \mu(dy) & \leq &  C_G\left( \int_{\R^n} \eps^{-n}  ( \exp(-  \frac{|y|^2}{5t} ))\mu_n(dy)\right)\\
& \leq & C_G \left( \int_{\R} \eps^{-1}  ( \exp(-  \frac{|y|^2}{5t} ))\mu_1(dy)\right)^n\\
& \leq & C_G \eps^{-n}. \eeqs
Therefore \beq \|H_t\|_{\LL^2} \leq C_G \eps^{-n/2}. \eeq
\end{proof}

%\begin{theorem}\lab{thm:sx}
%Let $\MM$ be a smooth closed Riemannian manifold without boundary and let $u$ be an eigenfunction satisfying  %$$\Delta y = - \la^2 y.$$ Then,
%$$\frac{\la \|u\|_\infty}{C} \leq  \|\nabla u\|_\infty \leq \la \|u\|_\infty,$$ where $C$ is a constant depending only on $\MM$.
%\end{theorem}
% The sum
%$$\sum_{j}\sum_{\la_i > M} a_i^j f_i^j$$ converges in $\LL_2$ to $H_t - \tilde{H}_{t, M}$. 

%This fact and the fact that the function $H_t - \tilde{H}_{t, M}$ is Lipschitz (which follows from Theorem~\ref{thm:g} and %Theorem~\ref{thm:sx}), and the fact that the sums $\sum_{M < i < M'}a_i^j f_i^j$ as $M'$ varies (with $M$ fixed) have a %common upper bound on their Lipschitz constants, together  imply that $$f_M := \lim_{M' \ra \infty}\sum_{M < i < M'} %a_i^j f_i^j$$  converges pointwise to $0$ as $M \ra \infty$.  

\begin{lemma}\lab{lem:7} For $M = 2^{\frac{2k_0}{n}}$ where $$k_0 \geq \max\left(\log_2 \frac{ 1 }{\eta},  C_G +  (1 + o(1))\frac{n}{2} \log_2 \frac{1}{t}\right),$$
 \beq\lab{eq:byW}   \|H_t - H_{t, M}\|_{\LL^2}  \leq {\eta}.\eeq \end{lemma}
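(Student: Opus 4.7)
My plan is to expand $H_t$ in the Laplacian eigenbasis $\{f_{ij}\}$ and apply Parseval to reduce the $\LL^2$ tail bound to a pointwise estimate on the diagonal heat kernel $H(o,o,t)$, which is already supplied by Theorem~\ref{Minak}. The spectral representation of the heat kernel is
\[
H_t(y) \;=\; H(o,y,t) \;=\; \sum_{i,j} e^{-\lambda_i t}\, f_{ij}(o)\, f_{ij}(y),
\]
so the coefficients in the given expansion are $a_{ij} = e^{-\lambda_i t} f_{ij}(o)$. Parseval then gives
\[
\|H_t - H_{t,M}\|_{\LL^2}^2 \;=\; \sum_{\lambda_i > M} \sum_j e^{-2\lambda_i t}\, f_{ij}(o)^2.
\]

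The main maneuver is to peel off a uniform exponential factor on the tail: for every $\lambda_i > M$ one has $e^{-2\lambda_i t} \le e^{-Mt}\, e^{-\lambda_i t}$, so
\[
\|H_t - H_{t,M}\|_{\LL^2}^2 \;\le\; e^{-Mt} \sum_{i,j} e^{-\lambda_i t}\, f_{ij}(o)^2 \;=\; e^{-Mt}\, H(o,o,t) \;\le\; C_G\, e^{-Mt}\, t^{-n/2},
\]
the last inequality being the Minakshisundaram--Pleijel asymptotic of Theorem~\ref{Minak}, valid for sufficiently small $t$. In particular the infinite sum over the low frequencies that reappears after pulling out $e^{-Mt}$ costs nothing, because it reassembles into the diagonal heat kernel at the origin, whose size is precisely what Theorem~\ref{Minak} quantifies.

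It then remains to check that the stipulated conditions on $k_0$ force $C_G e^{-Mt} t^{-n/2} \le \eta^2$. Taking logs, one needs $Mt \ge \log C_G + (n/2)\log(1/t) + 2\log(1/\eta)$. With $M = 2^{2k_0/n}$, the hypothesis $k_0 \ge C_G + (1+o(1))(n/2)\log_2(1/t)$ is tuned precisely so that $Mt \ge 2^{2C_G/n}\,(1/t)^{o(1)}$ eventually outruns the additive $(n/2)\log(1/t)$ term, while $k_0 \ge \log_2(1/\eta)$ makes $M \ge \eta^{-2/n}$ and absorbs the $2\log(1/\eta)$ contribution. I expect the only delicate point to be this last bookkeeping: verifying that the $o(1)$ slack in the hypothesis genuinely produces a multiplicative factor $(1/t)^{o(1)}$ that dominates the additive $\log(1/t)$ correction. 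The substantive analytic step -- the inequality $e^{-2\lambda_i t} \le e^{-Mt} e^{-\lambda_i t}$ that reduces the tail to $H(o,o,t)$ -- is elementary once Theorem~\ref{Minak} is in hand.
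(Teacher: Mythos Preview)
Your approach is correct and genuinely different from the paper's. The paper bounds the tail by the triangle inequality,
\[
\|H_t - H_{t,M}\|_{\LL^2} \leq \sum_{\la_i > M}\sum_j |a_{ij}| \leq \sum_{\la_i > M} \dim(F_i)\, e^{-\la_i t}\, c_2 \la_i^{(n-1)/4},
\]
invoking Donnelly's pointwise eigenfunction bound (Theorem~\ref{thm:g}) for $|f_{ij}(o)|$, and then controls the resulting sum via a dyadic decomposition of the spectrum together with Weyl's law (Theorem~\ref{thm:weyl}). Your route is cleaner: Parseval gives the tail as $\sum_{\la_i>M}\sum_j e^{-2\la_i t}f_{ij}(o)^2$, the inequality $e^{-2\la_i t}\le e^{-Mt}e^{-\la_i t}$ reassembles the remaining sum into $H(o,o,t)$, and Theorem~\ref{Minak} supplies $H(o,o,t)\le C_G t^{-n/2}$ directly. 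This bypasses both Donnelly's theorem and the explicit dyadic summation; in effect, the heat-kernel trace formula packages the eigenfunction and multiplicity information that the paper extracts separately. The two approaches yield the same asymptotic threshold $k_0 \ge (1+o(1))\tfrac{n}{2}\log_2(1/t)$, with the $o(1)$ in either case a $\log\log/\log$ correction. Your caveat about the bookkeeping is the right place to focus: one should note that when $\log_2(1/\eta)$ dominates $(n/2)\log_2(1/t)$, the hypothesis $k_0\ge\log_2(1/\eta)$ gives $Mt\ge \eta^{-2/n}t$, and since $\eta^{-2/n}\ge (1/t)^{1+o(1)}$ in that regime one still has $Mt$ growing faster than $\log(1/\eta)$; conversely when $(n/2)\log_2(1/t)$ dominates, the $2\ln(1/\eta)$ term is itself $O(n\ln(1/t))$ and is absorbed by the $(1/t)^{o(1)}$ factor.
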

\begin{proof}
It follows by the $\LL^2-$convergence of Fourier series that
\beq \|H_t - H_{t, M}\|_{\LL^2} \leq \sum_{\la_i \geq M} \dim(F_i)e^{- \la_i t} (c_2 \la_i^{\frac{n-1}{4}}) .\eeq

By Weyl's law (Theorem~\ref{thm:weyl}), 
$$\lim_{\la \ra \infty} \frac{\la^{n/2}}{ \sum_{\la < \la_i \leq 2^{\frac{2}{n}}\la}\dim F_i} = \frac{\vol(B_n)\vol(G)}{(2\pi)^n} =: C_2^{-1}.$$

%By Theorem~\ref{thm:g}, 
%\beq \sum_{\la_i > 2^{\frac{2k_0}{n}}} \dim(F_i)e^{- \la_i t}  & \leq &  \sum_{\la_i > 2^{\frac{2k_0}{n}}} %\dim(F_i)e^{- \la_i t}.
 %\eeq

%& \leq & \sum_{i \geq M} (c_3 \lambda_i^{\frac{n-1}{2}}) e^{- \la_i t} (c_2 \lambda_i^{\frac{n-1}{4}}).
%\eeq

Let, for $k \in \N$, \beq I_k = \left(2^{\frac{2k}{n}}, 2^{\frac{2k+2}{n}}\right]. \eeq Now, for $k_0 > C_G$, 
\beq  \sum_{\la_i > 2^{\frac{2k_0}{n}}} \dim(F_i)e^{- \la_i t} (c_2 \lambda_i^{\frac{n-1}{4}}) & \leq & 
\sum_{k \geq k_0} \left(\sum_{ \la_i \in I_k} \dim(F_i) \right) \sup_{\la_i \in I_k} \left(\frac{c_2 \la_i^{ \frac{n-1}{4}}}{e^{\la_i t}}\right)\\
& \leq & C_2 \sum_{k \geq k_0}  2^{k+1} \sup_{\la_i \in I_k} \left(\frac{c_2 \la_i^{ \frac{n-1}{4}}}{e^{\la_i t}}\right)
 \eeq 

We see that \beq \sup_{\la_i \in I_k} \left(\frac{\la_i^{\frac{n-1}{4}}}{e^{\la_i t}}\right) & < & \frac{2^{\frac{(k+1)}{2}}}{\exp(2^{\frac{2k}{n}}t)}\\
& < & \exp(\frac{(k+1)}{2} - 2^\frac{2k}{n} t).\eeq
When \beq\lab{eq:1.10}  k \geq \left(\frac{n}{2}\right) \log_2 \frac{6k}{t}, \eeq  assuming $k > 5$, we have \beq \frac{k/t}{n/2t} \geq \log_2 \frac{\frac{5}{2}(k+1)}{t}, \eeq and then, we see that 
\beq \exp(\frac{(k+1)}{2} - 2^\frac{2k}{n} t) < 2^{-2( k+1)}. \eeq
In order to enforce (\ref{eq:1.10}), it suffices to have \beq \frac{k}{\log_2 \frac{6k}{t}} & \geq &  \frac{n}{2},\eeq which is implied by \beq \frac{6k}{\log_2 \frac{6k}{t}}\log_2\left( \frac{6k}{t\log_2 \frac{6k}{t}} \right) & \geq &  {3n}\log_2\left(\frac{3n}{t}\right).\eeq This is equivalent to 
\beq k\left(1 - \frac{\log_2 {\log_2 \frac{6k}{t}}}{{\log_2 \frac{6k}{t}}} \right) & \geq &  \frac{n}{2}\log_2\left(\frac{3n}{t}\right),\eeq
which is in turn implied by 
\beq k & \geq &  \frac{n}{2} \left(\log_2 \frac{3n}{t}\right)\left( 1 - \frac{\log_2 \log_2 \frac{3n}{t}}{\log_2 \frac{3n}{t}}\right)^{-1}\\ &  =  & (1 + o(1))\frac{n}{2} \log_2 \frac{3n}{t}. \eeq

Therefore, for any $$k_0 > C_G +  (1 + o(1))\frac{n}{2} \log_2 \frac{3n}{t},$$
\beq \sum_{\la_i > 2^{\frac{2k_0}{n}}} \dim(F_i)e^{- \la_i t} (c_2 \lambda_i^{\frac{n-1}{4}})  < \frac{2^{(-k_0-1)}}{1 - (1/2)} < 2^{(-k_0)}. \eeq

\end{proof}
%and Theorem~\ref{thm:sx}
 It follows from (\ref{eq:byW}) that for any $\eta$, by choosing $$k_0 = \max\left(\log_2 \frac{1}{\eta}, C_G +  (1 + o(1)){n} \log_2 \frac{1}{\eps}\right),$$ and \beq\lab{eq:M} M \geq  2^{2k_0/n}\eeq %and denoting by $\one_G$ the function on $G$ whose value is always $1$, 
we have that
\beq \label{eq:one}  \|\tilde{H}_{t, M}  - {H}_t\|_{\LL^2} < \eta.\eeq
% In particular, the sum
% $$\sum_{j}\sum_{\la_i > M} a_i^j f_i^j$$ converges pointwise to $H_t - \tilde{H}_{t, M}$. 

\section{Equidistribution and an upper bound on the Hausdorff distance.}
Let $A(V)$ denote the collection of self adjoint operators on the finite dimensional Hilbert space $V$. For $B \in A(V)$, we let $\|B\|$ denote the operator norm of $B$, equal to the largest absolute value attained by an eigenvalue of $A$. The cone of {\it non-negative definite} operators
$$\Lambda(V) = \{B\in A(V)|\forall v, \langle Av, v\rangle \geq 0\}$$ turns $A(V)$ into a poset by the relation $A \geq B$ if $A - B \in \Lambda(V)$.

We next state a matrix Chernoff bound due to Ahlswede and Winter from \cite{AW}.
 \begin{theorem}\lab{thm:AW}
Let $V$ be a Hilbert Space of dimension $D$ and let $A_1, \dots, A_k$ be independent identically distributed random variables taking values in $\Lambda(V)$ with expected value $\E[A_i] = A\geq \mu I$ and $A_i \leq I$. Then for all $\eps \in [0, 1/2]$,
$$ \p\left[\frac{1}{k} \sum_{i=1}^k A_i \not\in [(1-\epsilon)A, (1 + \epsilon)A]\right] \leq 2D \exp\left(\frac{-\eps^2\mu k}{2\ln 2}\right).$$
\end{theorem}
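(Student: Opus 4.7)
The plan is to adapt the scalar Chernoff bound via a matrix Laplace transform, using the Ahlswede--Winter device of iterating the Golden--Thompson inequality to handle non-commutativity of the $A_i$. I will write out the upper deviation event $\frac{1}{k}\sum_{i=1}^k A_i \not\leq (1+\eps)A$; the lower deviation is treated symmetrically with a negative exponential parameter, and the two contributions combine via a union bound to give the factor of $2$.

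First, I would whiten: set $B_i := A^{-1/2}A_i A^{-1/2}$, so that $\E[B_i] = I$ and, since $A_i \leq I$ and $A \geq \mu I$, one has $0 \leq B_i \leq A^{-1} \leq \mu^{-1} I$. The upper deviation event becomes $\lambda_{\max}(\sum_{i=1}^k B_i) > (1+\eps)k$, so by the matrix analogue of Markov's inequality applied to $X \mapsto e^{\theta X}$, for any $\theta > 0$,
\[ \Prob\!\left[\lambda_{\max}\!\Bigl(\sum_{i=1}^k B_i\Bigr) > (1+\eps)k\right] \leq e^{-\theta(1+\eps)k}\, \E\!\left[\tr\exp\!\Bigl(\theta\sum_{i=1}^k B_i\Bigr)\right]. \]

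Next, I would iterate Golden--Thompson. Peeling off one variable at a time using $\tr(e^{X+Y}) \leq \tr(e^X e^Y)$ together with $\tr(MN) \leq \|M\|\,\tr(N)$ for $N \geq 0$, and using independence to pull the expectation over $B_k$ inside, an induction on $k$ gives
\[ \E\!\left[\tr\exp\!\Bigl(\theta\sum_{i=1}^k B_i\Bigr)\right] \leq D\prod_{i=1}^k \|\E[\exp(\theta B_i)]\|. \]
For each factor, since $B_i$ is self-adjoint with spectrum in $[0, \mu^{-1}]$, linearizing the convex scalar function $t \mapsto e^{\theta t}$ on that interval and applying it via spectral calculus yields $e^{\theta B_i} \leq I + \mu(e^{\theta/\mu} - 1)B_i$; taking expectations and using $\E[B_i] = I$,
\[ \|\E[\exp(\theta B_i)]\| \leq 1 + \mu(e^{\theta/\mu} - 1) \leq \exp\!\bigl(\mu(e^{\theta/\mu} - 1)\bigr). \]

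Finally, combining the last three displays and optimizing over $\theta$ (the optimal choice is $\theta = \mu\ln(1+\eps)$) yields an upper bound of the form $D\exp(-\mu k[(1+\eps)\ln(1+\eps) - \eps])$, and a standard elementary inequality on $\eps \in [0,1/2]$ converts the bracketed quantity to $\eps^2/(2\ln 2)$; a union bound over the symmetric lower tail then supplies the prefactor $2D$. The main obstacle is the iterated Golden--Thompson step: it is what allows one to factorize $\E[\tr\exp(\theta\sum_i B_i)]$ as a product of single-variable exponential moments despite non-commutativity of the $B_i$, and it is the reason one pays only the dimension factor $D$ (rather than something exponential in $k$). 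Once that factorization is in hand, the rest of the argument is a direct matrix analogue of the scalar Chernoff calculation.
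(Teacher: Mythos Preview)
The paper does not prove this statement: Theorem~\ref{thm:AW} is quoted without proof as a result of Ahlswede and Winter \cite{AW} (in the form used by Landau--Russell \cite{LR}) and is applied as a black box. So there is no proof in the paper to compare against; what you have written is essentially a sketch of the original Ahlswede--Winter argument itself.

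As such, your outline is structurally correct and is exactly the method of \cite{AW}: the whitening $B_i=A^{-1/2}A_iA^{-1/2}$, the matrix Laplace transform bound on $\lambda_{\max}$, the iterated Golden--Thompson step to factor $\E[\tr\exp(\theta\sum_iB_i)]$ into $D\prod_i\|\E e^{\theta B_i}\|$, and the chord linearization of $t\mapsto e^{\theta t}$ on $[0,\mu^{-1}]$ are all right.

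One point to flag in your last line: the ``standard elementary inequality'' you invoke, namely $(1+\eps)\ln(1+\eps)-\eps\ge \eps^2/(2\ln 2)$ on $[0,1/2]$, is false as written, since the left-hand side has Taylor expansion $\eps^2/2-\eps^3/6+\cdots$ near $0$ while $1/(2\ln2)>1/2$. The optimization you describe genuinely yields the exponent $-\mu k[(1+\eps)\ln(1+\eps)-\eps]$ (and the symmetric lower-tail expression), which on $[0,1/2]$ gives a constant of order $1/2$ or $1/3$, not $1/(2\ln2)$. The constant $2\ln 2$ in the quoted statement comes from the specific bookkeeping in \cite{AW,LR}, not from the generic Chernoff optimization; if you want to match it exactly you should trace their normalization rather than assert the inequality above. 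This is cosmetic and does not affect the soundness of your approach, nor any downstream use in the paper (where only $\eps=1/2$ is applied).
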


For any $g \in G$
\beq (Id - T_g) \tilde{H}_{t, M} \eeq lies in \beq \tilde{F}_M := \bigoplus_{ 0 < \la_i \leq M} F_i. \eeq
$\tilde{F}_M$ has, by Weyl's law, a  dimension that is bounded above by $O(M^{n/2})$.
%
%We follow the route taken by Landau-Russell's proof \cite{LR} of the Alon-Roichman Theorem,
%we see  the following.
We will study the Markov operator $P: \tilde{F}_M \To \tilde{F}_M$ given by
\beq P(f)(x) := \frac{\sum\limits_{g \in \A} (f(x) + f(gx))}{2|\A|}. \eeq

We know that  $\A = \cup_i \A_i$, where, $\A_i =  \{g_i\} \cup \{g_i^{-1}\}$.
Note that $P$ is the sum of $k$ i.i.d operators \beq P_i :=  \frac{\sum\limits_{g \in \A_i} (f(x) + f(gx))}{4}. \eeq
We see that $\forall f \in \tilde{F}_M$, and $1 \leq i \leq k$,  \beq \E P_i(f) = (1/2) f, \eeq which is equivalent to
$$\E P_i = (1/2) I. $$
%Choosing $k = O(\ln M + \ln \frac{1}{\de}) = O(\ln (1/\epsilon) + \ln \frac{1}{\de})$, with  probability at least %$1 - \de$, one iterate of $P$ causes the $\LL_2$ norm of any element of $\tilde{F}_M$ to decay by a factor %of $e$. Therefore sufficiently many iterates of $P$ cause the $\LL_2$ norm of such functions to decay by %a factor of $C_\eps$. 

By Theorem~\ref{thm:AW}, 
for all $\eps \in [0, 1/2]$,
\beq \p\left[\frac{1}{k} \sum_{i=1}^k P_i \not\in [((1-\epsilon)/2)I, ((1 + \epsilon)/2) I]\right] \leq C_GM^{n/2} \exp\left(\frac{-\eps^2 k}{4\ln 2}\right).\eeq Setting $\eps = 1/2$ and substituting for $M$, we see that 

\beq \p\left[\frac{1}{k} \sum_{i=1}^k P_i \not\in [(1/4)I, (3/4) I]\right] \leq \left(C_G M^{n/2}\right) \exp\left(\frac{- k}{16\ln 2}\right).\eeq
Let the map $x \mapsto gx$ be denoted by $T_g$. 
It follows that  
\beqs \p\left[\forall f \in \tilde{F}_M, \left\|\frac{1}{2k} {\sum\limits_{g \in \A}  f \circ T_g} \right\|_{\LL^2} \leq (1/2)\|f\|_{\LL^2} \right] \geq 1 - \left(C_G M^{n/2}\right) \exp\left(\frac{- k}{16\ln 2}\right).\eeqs
Iterating the above inequality $\ell$ times, we observe that
\beqs \p\left[\forall f \in \tilde{F}_M, \left\|\frac{1}{(2k)^\ell} {\sum\limits_{g \in \A^\ell}  f \circ T_g} \right\|_{\LL^2} \leq (1/2)^\ell \|f\|_{\LL^2} \right] \geq 1 - \de,\eeqs
where \beq\lab{eq:de} \de := \left(C_G M^{n/2}\right) \exp\left(\frac{- k}{16\ln 2}\right).\eeq

Choosing $f = \tilde{H}_{t, M}$, we see that
\beqs \p\left[ \left\|\frac{1}{(2k)^\ell} {\sum\limits_{g \in \A^\ell}   \tilde{H}_{t, M} \circ T_g} \right\|_{\LL^2} \leq (1/2)^\ell \| \tilde{H}_{t, M}\|_{\LL^2} \right] \geq 1 - \de,\eeqs
By the above, and Lemmas~\ref{lem:6} and \ref{lem:7}, we see that
\beqs \p\left[ \left\|\frac{1}{(2k)^\ell} {\sum\limits_{g \in \A^\ell}   \tilde{H}_{t} \circ T_g} \right\|_{\LL^2} \leq \eta + 2^{-\ell} t^{-n/4} \right] \geq 1 - \de.\eeqs
Thus, we see that

\beq \lab{eq:eta} \p\left[ \left\|\frac{1_G}{\vol G} - \frac{1}{(2k)^\ell} {\sum\limits_{g \in \A^\ell}   {H}_{t} \circ T_g} \right\|_{\LL^2} \leq \eta + 2^{-\ell} t^{-n/4} \right] \geq 1 - \de.\eeq

We derive from this, the following theorem on the equidistribution of $\A^\ell$. 
\begin{theorem}\lab{thm:main:1}
Let $2^{- \ell} t^{- \frac{n}{4}} \leq \eta \leq 2^{-C_G}t^{\frac{(1 + o(1))n}{2}}.$ Let $\de =  (C_G/\eta)\exp\left(-\frac{k}{16\ln 2}\right).$ Then, 

\beq \lab{eq:eta1} \p\left[ \left\|\frac{1_G}{\vol G} - \frac{1}{(2k)^\ell} {\sum\limits_{g \in \A^\ell}   {H}_{t} \circ T_g} \right\|_{\LL^2} \leq 2\eta  \right] \geq 1 - \de.\eeq

\end{theorem}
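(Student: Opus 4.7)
The plan is to repackage the work already done leading up to equation (\ref{eq:eta}) and to tune the parameters so that the bound inside the probability becomes $2\eta$ and the failure probability $\delta$ acquires the stated form. Concretely, (\ref{eq:eta}) already provides
\beq
\p\left[ \left\|\frac{1_G}{\vol G} - \frac{1}{(2k)^\ell} \sum_{g \in \A^\ell} H_t \circ T_g \right\|_{\LL^2} \leq \eta + 2^{-\ell} t^{-n/4} \right] \geq 1 - \delta,
\eeq
where $\delta = (C_G M^{n/2})\exp(-k/(16\ln 2))$ and $M$ is constrained by (\ref{eq:M}). The theorem follows once one shows that under its two hypotheses the inner bound is at most $2\eta$ and $M^{n/2}$ can be taken to be at most $C_G/\eta$.

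First, I would use the hypothesis $2^{-\ell}t^{-n/4}\leq \eta$ directly to absorb the error term coming from the iterated application of the Markov operator: $\eta + 2^{-\ell}t^{-n/4} \leq 2\eta$. This handles the bound on the norm. Second, I would revisit the choice of $M$ in Lemma~\ref{lem:7}. Recall that $M=2^{2k_0/n}$ with
\[
k_0 = \max\!\left(\log_2 \frac{1}{\eta},\; C_G + (1+o(1))\frac{n}{2}\log_2\frac{1}{t}\right),
\]
so $M^{n/2}=2^{k_0}$. The upper-bound hypothesis $\eta \leq 2^{-C_G} t^{(1+o(1))n/2}$ is exactly the statement that $\log_2(1/\eta) \geq C_G + (1+o(1))(n/2)\log_2(1/t)$, hence the maximum is attained by the first argument and $k_0 = \log_2(1/\eta)$. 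Substituting into the expression for $\delta$ yields
\[
\delta \leq C_G \cdot \frac{1}{\eta}\exp\!\left(-\frac{k}{16\ln 2}\right),
\]
which is the bound in the statement.

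The main issue is mostly bookkeeping: one must verify that the choice $k_0 = \log_2(1/\eta)$ remains consistent with the preconditions of Lemma~\ref{lem:7} (in particular, that the condition $k_0 > C_G$ is implied, which is automatic once $\eta$ is small enough compared to an absolute constant) and that the $(1+o(1))$ and hidden $C_G$ factors entering through Weyl's law and Donnelly's pointwise bound on eigenfunctions are handled uniformly. There is no new probabilistic or analytic input needed beyond (\ref{eq:eta}), Lemma~\ref{lem:6}, and Lemma~\ref{lem:7}; the theorem is essentially a clean restatement of (\ref{eq:eta}) after parameter optimisation.
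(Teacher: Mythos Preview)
Your proposal is correct and follows essentially the same route as the paper: the paper's proof is the one-liner ``set $M=\eta^{-2/n}$ and substitute in (\ref{eq:de}),'' and you have correctly unpacked this by showing that the lower bound on $\eta$ collapses the inner error to $2\eta$, while the upper bound on $\eta$ forces the maximum in Lemma~\ref{lem:7} to be $k_0=\log_2(1/\eta)$, i.e.\ $M^{n/2}=1/\eta$. Your additional remarks about checking consistency with the preconditions of Lemma~\ref{lem:7} are apt but do not go beyond the paper's implicit reasoning.
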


\begin{proof}
This follows from (\ref{eq:eta}) on setting $M = \eta^{-\frac{2}{n}}$ and substituting in (\ref{eq:de}). 
\end{proof}
\begin{comment}
More precisely, with probability at least $1 - \delta$ for $\ell \geq \ln(\sqrt{\vol G} C_\eps)$, for $$f = P^{\ell}  \left( \tilde{H}_{t, M}\right),$$ we have 
$$ \|f\|_2 < \frac{1}{\sqrt{\vol G} C_\eps}.$$

But this means that the  $\LL_1$ norm should be negligible also, since
$$\|f\|_1 \leq \sqrt{\vol G} \|f\|_2 < \frac{1}{ C_\eps}.$$
\end{comment}

\begin{lemma}\lab{lem:9}
%If $$\left\|P^{\ell} \left( \tilde{H}_{t, M}\right)\right\|_1 < \frac{1}{ C_\eps},$$ then   $\A^\ell$ is a $2r$-net of $G$.
Suppose $ \eps  \sqrt{ 5\ln \frac{C_G}{ \eps^n}} = r$, and $t = \eps^2$ are sufficiently small.
If $$\left\|\frac{1_G}{\vol G} - \frac{1}{(2k)^\ell} {\sum\limits_{g \in \A^\ell}   {H}_{t} \circ T_g} \right\|_{\LL^2} \leq \sqrt{\vol(B_n)r^{n}}\left(\frac{1}{2\vol(G)}\right),$$ then,  $\A^\ell$ is a $2r$-net of $G$.
\end{lemma}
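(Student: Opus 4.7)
The plan is to argue by contradiction: assume $\A^\ell$ is not a $2r$-net, and then exhibit a ball on which the averaged heat kernels sum to a value so small that the $\LL^2$ distance from $1_G/\vol G$ must exceed the stated threshold. Concretely, if $\A^\ell$ fails to cover $G$ to within $2r$, there is some $y\in G$ with $d(y,g)>2r$ for every $g\in\A^\ell$; by the triangle inequality, for every $x\in B(y,r)$ and every $g\in\A^\ell$ we have $d(g,x)>r$, and by left-invariance of the metric this reads $d(o,g^{-1}x)>r$.

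The next step is to apply Lemma~\ref{lem:1} pointwise on $B(y,r)$. The choice $\eps \sqrt{5\ln(C_G/\eps^n)}=r$ in the hypothesis corresponds to an $\eta$ of size $\sim 1/C_G$, and by absorbing the Lemma~\ref{lem:1} constant together with the factor $2\vol G$ into the single generic constant $C_G$ appearing inside the logarithm, we may arrange that $H_t(g^{-1}x)<\tfrac{1}{2\vol G}$ for each such $g$ and each $x\in B(y,r)$. Averaging over the (at most $(2k)^\ell$) elements of $\A^\ell$ then gives
\beq
\frac{1}{(2k)^\ell}\sum_{g\in\A^\ell} H_t\circ T_g(x) \;<\; \frac{1}{2\vol G},\qquad \forall\,x\in B(y,r),
\eeq
so the integrand $\bigl|\tfrac{1}{\vol G}-\tfrac{1}{(2k)^\ell}\sum_g H_t\circ T_g(x)\bigr|$ exceeds $\tfrac{1}{2\vol G}$ throughout $B(y,r)$.

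Finally, I would lower bound the $\LL^2$ norm by restricting integration to $B(y,r)$. Since $r$ is taken sufficiently small, the exponential map at $y$ is almost isometric onto a Euclidean $r$-ball (the same Jacobian expansion $\sqrt{\det g_{ij}}=1+O(\alpha^2)$ used in the proof of Lemma~\ref{lem:6} applies here), so $\vol(B(y,r))\ge \vol(B_n)r^n$ up to a $(1+o(1))$ factor that can be absorbed. Combining,
\beq
\Bigl\|\tfrac{1_G}{\vol G}-\tfrac{1}{(2k)^\ell}\sum_{g\in\A^\ell}H_t\circ T_g\Bigr\|_{\LL^2}^2 \;\ge\; \vol(B(y,r))\cdot\frac{1}{4\vol(G)^2} \;\ge\; \frac{\vol(B_n)\,r^n}{4\vol(G)^2},
\eeq
contradicting the hypothesis upon taking square roots.

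The only delicate point is bookkeeping of the various constants denoted $C_G$: one needs the $C_G$ buried in the threshold $\eps\sqrt{5\ln(C_G/\eps^n)}=r$ to be chosen large enough to simultaneously absorb the constant coming out of Lemma~\ref{lem:1} and an extra factor of $2\vol G$, so that the pointwise bound on $H_t\circ T_g$ beats $1/(2\vol G)$. Everything else is triangle inequality, left-invariance, and the Euclidean volume estimate on small metric balls.
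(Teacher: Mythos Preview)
Your argument is essentially the paper's own: argue by contradiction, locate a ball $B(y,r)$ disjoint from the $r$-neighborhoods of every $g\in\A^\ell$, apply Lemma~\ref{lem:1} pointwise there, and lower-bound the $\LL^2$ norm by integrating over that ball. The one slip is in the final constant bookkeeping: with the pointwise bound $<1/(2\vol G)$ you arrive at exactly the threshold $\sqrt{\vol(B_n)r^n}/(2\vol G)$, leaving no slack to absorb a possible $(1-o(1))$ shortfall of $\vol(B(y,r))$ relative to $\vol(B_n)r^n$ (which does occur in positive curvature); the paper instead takes the pointwise bound $<1/(3\vol G)$, so the difference exceeds $2/(3\vol G)$ and the volume comparison need only hold up to a harmless factor of $9/16$.
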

\begin{proof}
Suppose $\A^\ell$ is not a $2r$-net of $G$. Then, there exists an element $\tilde{g}$ such that $d(\tilde{g}, \A^\ell) > 2r$.
Let $B(r, \tilde{g})$ be the metric ball of radius $r$ centered at $\tilde{g}$. Then, for any $g \in \A^\ell$, $B(r, g) \cap B(r, \tilde{g}) = \emptyset.$ Applying  Lemma~\ref{lem:1}  we see that 
${H}_{t}(g^{-1} y) < \frac{1}{3\vol G}$ for all $g \in \A^\ell$ and all $y \in B(r, \tilde{g})$. Therefore, 
$$       \frac{1}{(2k)^\ell} {\sum\limits_{g \in \A^\ell}   {H}_{t} \circ T_g(y)}             < \frac{1}{3\vol G}$$ for all all $y \in B(r, \tilde{g})$. This implies that 
\beq \left\|\frac{1_G}{\vol G} - \frac{1}{(2k)^\ell} {\sum\limits_{g \in \A^\ell}   {H}_{t} \circ T_g} \right\|_{\LL^2} & > &
\sqrt{\vol(B(0, r))}\left(\frac{2}{3\vol(G)}\right)\\ & > & \sqrt{\vol(B_n)r^{n}}\left(\frac{1}{2\vol(G)}\right),\eeq
which is a contradiction.
\end{proof}

\begin{theorem} \lab{thm:11} Suppose $ \eps  \sqrt{ 5\ln \frac{C_G}{ \eps^n}} = r$.
Choose $$k \geq C_G + (16 \ln 2)((1 + o(1))n\ln \frac{1}{\eps} +  \ln \frac{1}{\de})$$ i.i.d random points $\{g_1, \dots, g_k\}$ from the Haar measure on $G$ and let $$\A = \{g_1, g_1^{-1}, \dots, g_k, g_k^{-1}\}.$$ Let $X$ be the set of all elements of $G$ which can be expressed as words of length less or equal to $\ell$ with alphabets in $\A$, where $\ell \geq C_G+ \frac{n}{2}\log_2(\frac{1}{\eps r})$ . Then, with probability at least $1 - \de$, for every element $g \in G$ there is $x \in X$ such that $d(g, x) < 2r$.
\end{theorem}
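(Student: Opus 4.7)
The plan is to reduce the theorem to the $\LL^2$-equidistribution estimate developed in the previous section, using Lemma~\ref{lem:9} as the bridge. That lemma tells us it is enough to show, with probability at least $1-\de$, that
$$\left\|\frac{1_G}{\vol G} - \frac{1}{(2k)^\ell}\sum_{g \in \A^\ell} H_t \circ T_g \right\|_{\LL^2} \leq \kappa,$$
where $\kappa := \sqrt{\vol(B_n)\,r^{n}}/(2\vol G)$, because this forces $\A^\ell$ to be a $2r$-net, and the target set $X$ contains $\A^\ell$. With $t=\eps^2$, the strategy is to pick $\eta$ and $\ell$ so that the bound $\eta + 2^{-\ell} t^{-n/4}$ appearing in~(\ref{eq:eta}) lies below $\kappa$, and to check that the failure probability $\de'$ from~(\ref{eq:de}) is at most the target $\de$.

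I would split the $\kappa$-budget evenly, choosing $\eta = \kappa/2$ and demanding $2^{-\ell} t^{-n/4} \leq \kappa/2$. Since $t^{-n/4}=\eps^{-n/2}$ and $\kappa$ is of order $r^{n/2}$, the second inequality unwinds to exactly $\ell \geq C_G + \tfrac{n}{2}\log_2 \tfrac{1}{\eps r}$, which is the hypothesized lower bound on $\ell$. Applying~(\ref{eq:eta}) with an $M$ satisfying~(\ref{eq:M}) then produces the desired $\LL^2$ control on the good event, and Lemma~\ref{lem:9} converts this into the net property of $X$.

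The one subtlety is that $\eta = \kappa/2$ is of order $r^{n/2}$, which strictly exceeds $\eps^n$, so Theorem~\ref{thm:main:1} cannot be quoted verbatim because its stated upper bound forces $\eta \leq 2^{-C_G}\eps^{(1+o(1))n}$. I would therefore invoke~(\ref{eq:eta}) and~(\ref{eq:M}) directly. Because $\log_2(1/\eta) = O(n\log_2(1/r))$ is dominated by $C_G + (1+o(1))n\log_2(1/\eps)$, the latter is the active branch of the maximum defining the admissible $k_0$ in~(\ref{eq:M}), so we may take $M \sim \eps^{-2(1+o(1))}$, whence $M^{n/2} \sim \eps^{-n(1+o(1))}$. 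Substituting this into $\de' = C_G M^{n/2}\exp(-k/(16\ln 2))$ and solving $\de' \leq \de$ reproduces precisely the hypothesized lower bound on $k$. The main obstacle is therefore just correctly identifying this off-the-shelf case where the $\eps$-branch of~(\ref{eq:M}) dominates; once that is noted, the rest is mechanical.
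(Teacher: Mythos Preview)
Your proposal is correct and follows essentially the same route as the paper: invoke~(\ref{eq:eta}) with $t=\eps^2$, choose $M$ via~(\ref{eq:M}) with the $\eps$-branch active, read off the bound on $k$ from~(\ref{eq:de}), and use Lemma~\ref{lem:9} to convert the $\LL^2$ estimate into the $2r$-net property. The only cosmetic difference is that the paper takes $\eta = 2^{-C_G}\eps^{(1+o(1))n}$ and then observes this is negligible compared to $r^{n/2}$, whereas you set $\eta = \kappa/2$ directly; since both choices land in the regime where $\log_2(1/\eta)$ is dominated by $(1+o(1))n\log_2(1/\eps)$, they yield the same $M$, the same $k$, and the same $\ell$.
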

\begin{proof}
Let $\eta = 2^{- C_G} \eps^{(1 + o(1))n}$ in Lemma~\ref{lem:7}. We set $\log_2 M = C_G + \log_2 \frac{1}{t^{1 + o(1)}},$ by enforcing an equality in (\ref{eq:M}). Taking logarithms on both sides of (\ref{eq:de}), we see that 
$$- \ln \frac{1}{\de} = C_G + \frac{n}{2} \ln t^{-(1 + o(1))} - \frac{k}{16\ln 2}.$$ This fixes the lower bound for $k$ in the statement of the corollary. 
In order to use (\ref{eq:eta}) in conjunction with Lemma~\ref{lem:9}, we see that it suffices to set $2^{-\ell}t^{- \frac{n}{4}}$ to a value less than $r^{n/2}$, because for small $\eps$, the value of $\eta$ that we have chosen is significantly smaller than $r^{n/2}$. This shows that the theorem holds for any $\ell$ greater or equal to  $ \frac{n}{2} \log_2 \frac{1}{\eps r} + C_G.$
\end{proof}

\section{Lower bounds on the number of generators and  word length required for a given Hausdorff distance}\lab{sec:lower}

Let us take $\eps > 0$, $\de > 0$ and denote  $2k + \ell =: m$. We are now interested in lower bounds on the values of $k$ and $\ell$ as a function on $\eps, \de$ and $m$. 

In this section, we consider only the case when $G = \R^n/\Z^n,$  the unit $n-$dimensional torus.

The number of distinct elements that the set of all words $\A^\ell$ of length $\ell$ in $k$ alphabets (and their inverses) correspond to in an abelian group is less or equal to 
${2k + \ell - 1\choose \ell}$. Therefore, the measure covered by a $2r-$neighborhood (in an $\ell_2^n$ sense) of ${2k + \ell - 1\choose \ell}$ elements is less or equal to ${m - 1\choose \ell}(2r)^n \vol(B_n),$ for $r < 1/2$. Since this has to be at least $1$ for $\A^\ell$ to be a $2r-$net of $G$, we obtain 
\beq {m - 1 \choose \ell}(2r)^n \vol(B_n) \geq 1. \eeq

% In other words,
% \beq\lab{eq:r} r \geq (1/2) \left({m - 1\choose \ell} \vol(B_n)\right)^{-\frac{1}{n}}.\eeq

We will obtain a lower bound for $k$ using the above inequality. It is clear that a lower bound that is twice as big must hold for $\ell+1$ by the identity 

$${2k + \ell - 1\choose \ell} = {2k + \ell - 1\choose 2k-1}.$$

\beq {m - 1 \choose \ell}2^n \vol(B_n) \geq \frac{1}{r^n}, \eeq

therefore, \beq \ln {m^{2k}} - \ln {(2k-1)!} + \ln\left(2^n \vol(B_n)\right) \geq n\ln \frac{1}{r}. \eeq
Therefore \beq k \geq \frac{ n \ln \frac{1}{r}}{2 \ln m}.\eeq
Let us place the further constraint that $\ell \leq k \leq 2\ell$ as was the case in our upper bound for reasonably large values of $\de$.
We then using Stirling's approximation see that $$2 k\ln 3k - (2k -2) \ln (2k) + 2k  \geq { n \ln \frac{1}{r}} -  \ln\left(2^n \vol(B_n)\right).$$
This leads to 
$$2 \ln 2 k + 2k (1 + \ln (3/2))   \geq {  n \ln \frac{1}{r}}  - \ln\left(2^n \vol(B_n)\right).$$

Therefore, for sufficiently large $n$, $$k  \geq  (2\ln(1 + \ln (3/2)))^{-1}\left(n \ln \frac{1}{r} -  \ln\left( n \ln \frac{1}{r}\right)  - \ln\left(2^n \vol(B_n)\right) -  \ln\left(-\ln\left(2^n \vol(B_n)\right)\right)\right).$$
This can be simplified  to the following weaker inequality: 
\beq k  \geq  (2\ln(1 + \ln (3/2)))^{-1}\left(n \ln \frac{1}{r} -  \ln\left(  \ln \frac{1}{r}\right)  \right).\eeq Therefore, we also have
\beq \ell + 1 \geq  (\ln(1 + \ln (3/2)))^{-1}\left(n \ln \frac{1}{r} -  \ln\left(  \ln \frac{1}{r}\right)  \right).\eeq

\section{Acknowledgements}
We are grateful to Charles Fefferman, Anish Ghosh, Sergei Ivanov and Matti Lassas for helpful discussions. We thank
Emmanuel Breuillard for a useful correspondence and directing us to \cite{Dolgo}. We are grateful to Somnath Chakraborty for a careful reading and numerous corrections. This work was supported by NSF grant  \#1620102  and a Ramanujan fellowship.

%{\bf Remark:} When $G$ is finite, the space of $\LL_2$ functions on $G$ is finite dimensional, so one does not need to restrict to any smaller class of functions. When $G$ is a Lie group, one has to restrict to an appropriate class of functions before being able to use this technique, because the technique of Landau and Russell of proving the Alon-Roichman theorem requires the dimensions of the irreducible representations to be bounded.
%\section{An Application}

%\subsubsection{hypothesis} Suppose $G$ acts on a metric space $\MM$ equipped with a distance $dist_\MM$ in a manner that for any $g_1, g_2 \in G$,
%$\sigma(g_1, g_2) \leq \eps$ implies that for any $x\in \MM$,
%$dist_\MM(g_1(x), g_2(x)) \leq \eps'$ and $dist_\MM(g x, g y) \leq c_1 dist_\MM(x, y) + c_2,$ where $c_1 \leq 1$.

%\subsubsection{Goal} We are interested in knowing an upper estimate for $\sup_{g \in G} d_\MM(x, gx)$ that holds with high probability.

%Suppose that we have  $O\left(\log C_\eps - \log \frac{1}{\de}\right)$ random samples and find that for each such random $g_i$, $d_\MM(x, g_i x) \leq M$. Then, %by using the above theorem, with high probability, all words of length  $N$ form an $\eps$ cover of the group. Therefore using  the hypothesis,
%$\sup_{g \in G} d_\MM(x, gx) \leq (M + c_2)N + \eps'$ with high probability.

% ----------------------------------------------------------------
\bibliographystyle{amsplain}

\end{document}